\newcommand{\uN}{\mathbb{N}}
\newcommand{\uA}{\mathcal{A}}
\newcommand{\uD}{\mathcal{D}}
\newcommand{\opP}{\mathbf{P}}
\newcommand{\opQ}{\mathbf{Q}}
\newtheorem{thm}{Theorem}[section]
\newtheorem{exam}{Example}[section]
\newtheorem{rem}{Remark}[section]
\newtheorem{defn}{Definition}[section]
\newtheorem{cor}{Corollary}[section]
\newenvironment{proof}{\noindent\\ \noindent\relax{\sc
     Proof}}{{\samepage\par\nopagebreak\hbox
     to\hsize{\hfill$\Box$}}}
\newcommand{\be}{\begin{equation}} \newcommand{\ee}{\end{equation}}
\newcommand{\bd}{\begin{displaymath}} \newcommand{\ed}{\end{displaymath}}
\newcommand{\ba}{\begin{align}} \newcommand{\ea}{\end{align}}
\newcommand{\baa}{\begin{align*}} \newcommand{\eaa}{\end{align*}}
\newcommand{\ben}{\begin{enumerate}} \newcommand{\een}{\end{enumerate}}
\newcommand{\bi}{\begin{itemize}} \newcommand{\ei}{\end{itemize}}
\newcommand{\ud}{\mathrm{d}}
\newcommand{\E}[1]{\operatorname{E}\left[ #1 \right]}
\newcommand{\supp}[1]{\operatorname{supp}\left( #1 \right)}
\begin{document}

\title{Prevalence problem in the set of quadratic stochastic operators acting on
$L^{1}$}
\author{{\sc Krzysztof Bartoszek} and {\sc Ma\l gorzata Pu\l ka}} 

\maketitle

\begin{abstract}
This paper is devoted to the study of the problem of prevalence in the class of quadratic stochastic operators
acting on the $L^{1}$ space for the uniform topology. We obtain that the set of 
norm quasi--mixing quadratic stochastic
operators is a dense and open set in the topology induced by a very natural metric. This shows the typical long--term
behaviour of iterates of quadratic stochastic operators.
\end{abstract}

Keywords : 
Quadratic stochastic operators, Nonhomogeneous
Markov operators, Baire category, Mixing nonlinear Markov process

\section{Introduction}
\label{intro}
Iterates of Markov operators have been studied for a long time due to their wide range of
applications in different areas of science and technology.
Today, motivated by numerous biological and physical phenomena, there is a growing interest in nonlinear methods.
Recently \citet{VKol2010} wrote a detailed overview of the theory of nonlinear Markov processes.
Here we study the so--called quadratic stochastic operators which are bilinear by definition.
They were first introduced by \citet{SBer1924} to describe the evolution of a discrete probability
distribution of a finite number of biotypes in a process of inheritance. Since then
the field has been steadily evolving and \citet{Ke1,L} provide a good overview of it.
Furthermore \citet{RGanFMukURoz2011} discuss a number of open problems in it.

A typical question when working with quadratic stochastic operators is the long--term behaviour of their iterates.
\citet{WBarMPul2013}
introduced and studied different types of asymptotic behaviours of quadratic
stochastic operators acting on the $\ell^{1}$ space. These results were subsequently
extended to the $L^{1}$ case by us \citep{KBarMPul2015}.
In this paper we are especially interested in answering the question what a typical
(generic with respect to a specified metric topology) quadratic stochastic operator acting on $L^{1}$
looks like, i.e. we search for prevalent subsets in the class of quadratic stochastic operators.
However, our work is more than a continuation of our previous results \citep{KBarMPul2015}.
In our previous paper \citep{KBarMPul2015} we showed equivalent conditions for asymptotic
stability of quadratic stochastic operators of a very special type, namely kernel ones. 
Here we do not restrict ourselves to this class but study the geometry of the set
of all quadratic stochastic operators. 

\section{Basic concepts}
\label{sec:2}
In this section we introduce notation alongside some basic definitions and properties.
Let $\left(X, \uA, \mu\right)$ be a separable $\sigma$--finite
measure space. Throughout the paper by $L^{1}$ we denote the (separable)
Banach lattice of real and $\uA$-measurable functions $f$ such that
$|f|$ is $\mu$-integrable and equipped with the norm $\Vert f \Vert_1 :=
\int_X |f| d\mu$. By $\uD := \uD\left(X, \uA,\mu\right)$ we denote the convex set of all
\emph{densities} on $X$, i.e.
\begin{displaymath}
\uD = \left\{ f \in L^1: f \geq 0,
\left\Vert f\right\Vert_{1} = 1\right\}.
\end{displaymath}
We say that a linear operator $P \colon L^1 \to
L^1$ is \emph{Markov} (or \emph{stochastic}) if
\begin{displaymath}
P f \geq 0 \quad \textrm{ and } \quad \left\Vert P f\right\Vert_1 =
\left\Vert f\right\Vert_1
\end{displaymath}
for all $f \geq 0$, $f \in L^1$. Clearly
$\left\Vert \left| P\right| \right\Vert := \sup_{\Vert f\Vert_1 = 1} \Vert Pf \Vert_1 =1$ and $P\left(\uD\right) \subset
\uD$. The sequence of such operators denoted by $\opP = (
P^{[n,n+1]})_{n \geq 0}$ is called a \emph{(discrete time)
nonhomogeneous chain of stochastic operators} or shorter, a \emph{nonhomogeneous Markov chain}.
For $m,n \in \uN_0 := \uN \cup \{ 0 \}$, $n-m \geq 1$, and any $f \in L^1$ we
set 
\bd
P^{[m,n]}f = P^{[n-1,n]}( P^{[n-2,n-1]}(
\cdots ( P^{[m,m+1]} f) \cdots )).\ed
We denote by $I$ the identity operator
and naturally $P^{[n,n]}=I$. If for
all $n \geq 0$ one has $P^{[n,n+1]} = P$, then we say
that $\opP = (P)_{n \geq 0}$ is \emph{homogeneous}. The set of all chains of Markov
operators $\opP = ( P^{[n,n+1]})_{n \geq 0}$ will be
denoted by $\mathfrak{S}$.

Different types of asymptotic behaviours as well as residualities in the set $\mathfrak{S}$ (endowed
with suitable natural metric topology) have been recently intensively
studied by \citet{MPul2011,MPul2012}. 
Following \citet{MPul2012} we introduce the below types of asymptotic stabilities.

\begin{defn}
A discrete time nonhomogeneous chain of stochastic operators $\opP \in \mathfrak{S}$ is
called
\begin{enumerate}
\item uniformly asymptotically stable if there exists a unique
$f_*\in \mathcal{D} $ such that for any $m \geq 0$

$$\lim\limits_{n\to\infty}\sup\limits_{f\in \mathcal{D}} \left\Vert P^{[m,n]}f - f_* \right\Vert_1 = 0 ,$$
\item  almost uniformly asymptotically stable if for any $m \geq 0$

$$\lim\limits_{n\to\infty}\sup\limits_{f, g\in \mathcal{D}} \left\Vert P^{[m,n]}f - P^{[m,n]}g \right\Vert_1 = 0 , $$
\item  strong asymptotically stable if there exists a unique $f_*\in \mathcal{D}$
such that for all $m \geq 0$ and $f \in \mathcal{D} $

$$\lim\limits_{n\to\infty} \left\Vert P^{[m,n]}f - f_* \right\Vert_1 = 0 ,$$
\item  strong almost asymptotically stable if for all $m \geq 0$ and $f, g \in \mathcal{D}$

$$\lim\limits_{n\to\infty} \left\Vert P^{[m,n]}f - P^{[m,n]}g \right\Vert_1 = 0 \ .$$
\end{enumerate}
\end{defn}

The long--term behaviour of iterates of nonhomogeneous stochastic operators
is still a subject of interest, despite having been intensively studied.
\citet{UHer1988} provides a
comprehensive review of different asymptotic
behaviours of nonhomogeneous Markov chains.
Very recently \citet{FMuk2013a,FMuk2013b,FMuk2013c} contributed further this direction.

One of the reasons for the growing interest in quadratic stochastic operators is their
applicability to biological sciences, especially in the modelling of reproducing populations
\citep{KBarMPul2013,NGanJDaoMUsm2010,NGanMSabUJam2013,NGanMSabANaw2014,NGanUJam2012,RRudPZwo2015,PZwo2015}
and describing multi--agent systems \citep{MSabKSab2014a,MSabKSab2014b}. This interest has
resulted in numerous recent works on general \citep{VAle2015,FMukNSup2015,FMukMSabIQar2013} and ergodic 
\citep{NGanDZan2012,NGanHAkiFMuk2006,NGanDZan2004,MPul2011,MSab2007} properties of 
QSOs, specific subclasses of QSOs like Volterra \citep{FMuk2014,FMukMSab2010,MSab2012a,MSab2013} or other 
\citep{NGanNHam2014}.
These new results build on a rich previous literature \citep[e.g.][]{RGan1989,RGan1993,RGanDESh2006,TSarNGan1990,MZak1978}.
We now define a quadratic stochastic operator acting on the $L^{1}$
space \citep[cf.][]{KBarMPul2015}.

\begin{defn}\label{dfQSO}
A bilinear operator $\opQ \colon L^1 \times L^1 \to L^1$ is called a
quadratic stochastic operator if

$$ \opQ(f,g) \geq 0\ ,  \quad
\opQ(f,g) = \opQ(g,f) \quad \textrm{ and } \quad \Vert\opQ(f,g) \Vert_1 = \Vert f\Vert_1 \Vert g\Vert_1  $$
for all $f,g \geq 0$, $f,g \in L^1$.
\end{defn}

Notice that $\opQ$ is bounded as $\sup_{\Vert f\Vert_1 = 1, \Vert g\Vert_1 = 1} \Vert \opQ(f,g)\Vert_1 = 1$.
Moreover if $\widetilde{f} \ge f \ge 0$ and $\widetilde{g} \ge g \ge 0$
then $\opQ(\widetilde{f},\widetilde{g}) \ge \opQ(f,g)$.
Clearly, $\opQ ( \uD \times \uD ) \subseteq\uD$.
The family of all quadratic stochastic
operators will be denoted by $\mathfrak{Q}$.

In population genetics
special attention is paid to a nonlinear mapping
$\mathcal{D} \ni f \mapsto \mathbb{Q}(f) := \mathbf{Q}(f,f) \in \uD$.
The iterates $\mathbb{Q}^{n}(f)$, where $n = 0,1,2,\ldots$,
model the evolution of a distribution of some (continuous) trait of an inbreeding or
hermaphroditic population. We \citep{KBarMPul2013} previously discussed in detail biological
interpretations of different types of asymptotic behaviours of quadratic stochastic operators in
the discrete $\ell^{1} = L^{1}(\mathbb{N}, 2^{\mathbb{N}}, \textrm{ counting measure})$ case.
Clearly, $\mathbb{Q}( \uD) \subseteq \uD$.

We \citep{KBarMPul2015} showed that $\opQ$ is continuous on $L^1 \times L^1$
and uniformly continuous if applied to vectors from the unit ball in $L^1$. In particular,
$\mathbb{Q}$ is uniformly continuous on the unit ball in $L^1$.

We will endow the set $\mathfrak{Q}$ with a metric structure relevant to the uniform convergence on $\mathcal{D}$.
Given $\mathbf{Q}_1, \mathbf{Q}_2 \in \mathfrak{Q}$ let us define
\begin{enumerate}
\item[(1)]  $d_u(\opQ_1,\opQ_2) = \sup_{f \in
\uD} \left\Vert\mathbb{Q}_1(f) - \mathbb{Q}_2(f) \right\Vert_1$,
\item[(2)] $\widehat{d}_u(\opQ_1,\opQ_2) = \sup_{f,g \in
\uD} \left\Vert\mathbf{Q}_1(f,g) - \mathbf{Q}_2(f,g) \right\Vert_1$.
\end{enumerate}

\noindent Notice that the above metrics are equivalent. Clearly we have

$$ d_u (\opQ_1, \opQ_2) = \sup\limits_{f \in \uD} \Vert\mathbb{Q}_1 (f) - \mathbb{Q}_2 (f)\Vert_1 \leq \sup\limits_{f,g \in \uD} \Vert \opQ_1(f,g) - \opQ_2(f,g)\Vert_1 = \widehat{d}_u(\opQ_1, \opQ_2) \ . $$
On the other hand, let 

$$\Vert\opQ_1(\widetilde{f},\widetilde{g}) - \opQ_2(\widetilde{f},\widetilde{g})\Vert_1 \geq (1-\varepsilon)\widehat{d}_u(\opQ_1,\opQ_2) $$
for some $\widetilde{f},\widetilde{g} \in \uD$ (where later on we let $\varepsilon \to 0^{+}$).
If $\widetilde{f} = \widetilde{g}$ $\mu$ - a.e. then

$$ d_u (\opQ_1, \opQ_2)  \geq \Vert\mathbb{Q}_1(\widetilde{f}) - \mathbb{Q}_2(\widetilde{f})\Vert_1 \geq \frac{1}{2}\widehat{d}_u(\opQ_1, \opQ_2) \geq \frac{1}{4}\widehat{d}_u(\opQ_1, \opQ_2)\ .$$
If $d_u(\opQ_1, \opQ_2)  \leq \tfrac{1}{2}\widehat{d}_u(\opQ_1, \opQ_2) $ then by bilinearity of $\opQ_1$
and $\opQ_2$ for $h := \tfrac{1}{2}\widetilde{f} + \tfrac{1}{2}\widetilde{g} \in \uD$ we have

$$
\begin{aligned}
d_u(\opQ_1, \opQ_2) &\geq \left\Vert\mathbb{Q}_1(h) - \mathbb{Q}_2 (h)\right\Vert_1 \\
&= \left\Vert\mathbb{Q}_1(\frac{1}{2} \widetilde{f}+ \frac{1}{2}\widetilde{g}) - \mathbb{Q}_2(\frac{1}{2}\widetilde{f} + \frac{1}{2}\widetilde{g})\right\Vert_1 \\
&\geq \frac{1}{2}\left\Vert\mathbf{Q}_1(\widetilde{f},\widetilde{g}) - \mathbf{Q}_2(\widetilde{f},\widetilde{g})\right\Vert_1 - \frac{1}{4}\left\Vert\mathbb{Q}_1(\widetilde{f}) - \mathbb{Q}_2(\widetilde{f})\right\Vert_1 - \frac{1}{4} \left\Vert\mathbb{Q}_1(\widetilde{g})  - \mathbb{Q}_2(\widetilde{g})\right\Vert_1 \\
&\geq \frac{1}{2}(1-\varepsilon)\widehat{d}_u(\opQ_1, \opQ_2) - \frac{1}{8}\widehat{d}_u(\opQ_1, \opQ_2) - \frac{1}{8}\widehat{d}_u(\opQ_1, \opQ_2)\\
&= \left(\frac{1}{2} - \frac{\varepsilon}{2} - \frac{1}{4}\right)\widehat{d}_u(\opQ_1, \opQ_2) \ .
\end{aligned}
$$
Letting $\varepsilon \to 0^+$ finally we obtain

$$
\frac{1}{4}\widehat{d}_u(\opQ_1, \opQ_2) \leq  d_u(\opQ_1, \opQ_2) \leq \widehat{d}_u(\opQ_1, \opQ_2) \ .$$

A natural question arises concerning the necessity of the symmetry condition in the definition of a quadratic
stochastic operator. Indeed, in general one could consider a
\emph{nonsymmetric quadratic stochastic operator}, i.e. a bilinear
operator $\opQ \colon L^1 \times L^1 \to L^1$ such that
$ \opQ(f,g) \geq 0$ and $\Vert\opQ(f,g) \Vert_1 = \Vert f\Vert_1 \Vert g\Vert_1  $ for all $f,g \geq 0$,
$f,g \in L^1$. However this situation is a topic for further research.
A small number of fragments of \citet{WBarMPul2013}'s and our \citep{KBarMPul2015}'s proofs do not follow through in this case.
Furthermore we notice that $d_u$ will not be a metric.
This is as for $\opQ^\flat, \opQ^\sharp \in \mathfrak{Q}$
defined by $\opQ^\flat(f,g) = f (\int_X g d\mu)$ and $\opQ^\sharp(f,g) = g (\int_X f d\mu)$
for any $f,g \in L^{1}$ we have that both $\opQ^\flat$ and $\opQ^\sharp$ are nonsymmetric quadratic stochastic operators such that
$\opQ^\flat \neq \opQ^\sharp$ and
$d_u(\opQ^\flat, \opQ^\sharp) = \sup_{f \in \uD} \Vert\mathbb{Q}^\flat(f) - \mathbb{Q}^\sharp(f)\Vert_1 = 0$.
On the other hand, it is easy to check that $\widehat{d}_u$ is
a metric in the class of nonsymmetric quadratic stochastic operators.

Different types of asymptotic behaviours of quadratic stochastic operators and the relationships between
them have been recently intensively studied by \citet{WBarMPul2013}, \citet{KBarMPul2015} and \citet{RRudPZwo2015}.
We follow \citet{KBarMPul2015} in defining a quadratic stochastic operator.

\begin{defn}
A quadratic stochastic operator  $\mathbf{Q}\in \mathfrak{Q}$ is called:
\begin{enumerate}
\item norm mixing (uniformly asymptotically stable) if there exists a density
$f\in \mathcal{D} $ such that

$$\lim\limits_{n\to\infty}\sup\limits_{g\in
\mathcal{D}} \left\Vert \mathbb{Q}^n(g) - f \right\Vert_1 = 0 \ , $$
\item  strong mixing (asymptotically stable) if there exists a density
$f\in \mathcal{D}$ such that for all $g \in \mathcal{D} $ we have

$$\lim\limits_{n\to\infty} \left\Vert \mathbb{Q}^n(g) - f
\right\Vert_1 = 0 \  , $$
\item  strong almost mixing if  for all  $g,h \in \mathcal{D} $ we have

$$\lim\limits_{n\to\infty} \left\Vert \mathbb{Q}^n(g) - \mathbb{Q}^n(h) \right\Vert_1 = 0 \ .$$
\end{enumerate}
\end{defn}

The sets of all norm mixing, strong mixing, strong almost mixing quadratic stochastic operators are denoted
respectively by $\mathfrak{Q}_{nm}, $  $\mathfrak{Q}_{sm}, $ $\mathfrak{Q}_{sam}. $
It can be easily seen \citep[cf.][]{KBarMPul2015} that
$ \mathfrak{Q}_{nm} \subsetneq \mathfrak{Q}_{sm} \subsetneq \mathfrak{Q}_{sam}$.

We now introduce the relation between quadratic stochastic operators
and (linear) Markov operators. \citet{NGan1991} was the first to introduce this approach and
it was used recently by \citet{WBarMPul2013} and us \citep{KBarMPul2015}.
This correspondence allows one to study a linear model instead of a nonlinear one.
Again we follow \citet{KBarMPul2015}.

\begin{defn}
Let $\opQ \in \mathfrak{Q}$. For arbitrarily fixed initial density function $g \in \uD$ a
nonhomogeneous Markov chain associated with $\opQ$ and $g \in \uD$ is defined as
a sequence $\opP_{g} = ( P_{g}^{[n,n+1]})_{n \geq 0}$  of Markov operators $P_{g}^{[n,n+1]} \colon L^1 \to L^1$ of the form

$$P_{g}^{[n,n+1]}(h) := \opQ(\mathbb{Q}^{n}(g), h) \ .$$
\end{defn}

Notice that if the initial density $f$ is $\mathbb{Q}$--invariant (i.e. $\mathbb{Q}(f) = f$),
then the associated Markov chain $\opP_{f}$ is homogeneous as for any $h \in L^1$ the expression
$\opQ(\mathbb{Q}^n(f),h) = \opQ(f,h)$ does not depend on $n$.
In this case we abbreviate the notation and write $P^{[n,n+1]}_f =: P_{f}$ and $P^{[0,n]}_f =: P_{f}^{n}$.

Norm mixing of a quadratic stochastic operator $\opQ \in \mathfrak{Q}$ is evidently correlated with asymptotic behaviour of
its associated nonhomogeneous Markov chain as we proved \citep{KBarMPul2015}. Namely we
have

\begin{thm}[\citet{KBarMPul2015}]\label{thm6}
Let $\opQ$ be a
quadratic stochastic operator.
The following conditions are equivalent:
\begin{enumerate}
\item[(1)] \ There exists $f \in \mathcal{D}$ such that

$$\lim\limits_{n \to \infty} \sup\limits_{g \in \mathcal{D} } \left\Vert \mathbb{Q}^{n}(g) - f \right\Vert_{1} = 0. $$
\item[(2)] \ There exists $f \in \mathcal{D}$ such that

$$\lim\limits_{n \to \infty} \sup\limits_{g,h \in \mathcal{D} }\left\Vert P_{h}^{[0,n]}(g) - f \right\Vert_{1} = 0. $$
\item[(3)] \ There exists $f \in \mathcal{D}$ such that for every $m \ge 0$ we have

$$\lim\limits_{n \to \infty} \sup\limits_{g,h \in \mathcal{D} }\left\Vert P_{h}^{[m,n]}(g) - f \right\Vert_{1} = 0, $$
i.e. independently of the seed $g \in \mathcal{D}$, all nonhomogeneous Markov chains
$\mathbf{P}_{g} = (P_{g}^{[n,n+1]})_{n \ge 0} $
are norm mixing with a common limit distribution $f$ and the rate of
convergence is uniform for $g$.
\end{enumerate}
\end{thm}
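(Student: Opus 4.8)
The plan is to establish the cycle of implications $(1)\Rightarrow(3)\Rightarrow(2)\Rightarrow(1)$; two of the links are immediate. The implication $(3)\Rightarrow(2)$ is the special case $m=0$. For $(2)\Rightarrow(1)$ I would use the ``diagonal'' of the construction: since $P_{h}^{[0,1]}(h)=\opQ(\mathbb{Q}^{0}(h),h)=\opQ(h,h)=\mathbb{Q}(h)$ and, by induction, $P_{h}^{[0,n]}(h)=\mathbb{Q}^{n}(h)$, taking $g=h$ in (2) yields $\sup_{h\in\uD}\Vert\mathbb{Q}^{n}(h)-f\Vert_{1}\le\sup_{g,h\in\uD}\Vert P_{h}^{[0,n]}(g)-f\Vert_{1}\to 0$, which is (1). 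Thus the whole substance is in $(1)\Rightarrow(3)$.

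For $(1)\Rightarrow(3)$ I would argue in three stages. \emph{(i) Fixed point.} Condition (1) forces $f$ to be the unique $\mathbb{Q}$-invariant density: continuity of $\mathbb{Q}$ applied to $\mathbb{Q}^{n}(g)\to f$ gives $\mathbb{Q}(f)=f$, and any other fixed density $f'$ satisfies $f'=\mathbb{Q}^{n}(f')\to f$. \emph{(ii) Removing the seed.} The operators $P_{h}^{[j,j+1]}=\opQ(\mathbb{Q}^{j}(h),\cdot)$ are linear, so $P_{h}^{[m,n]}$ is a composition of linear Markov operators, and the shifted diagonal identity gives $P_{h}^{[m,n]}(\mathbb{Q}^{m}(h))=\mathbb{Q}^{n}(h)$. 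Hence for every $g\in\uD$,
$$\Vert P_{h}^{[m,n]}(g)-f\Vert_{1}\le\Vert P_{h}^{[m,n]}\big(g-\mathbb{Q}^{m}(h)\big)\Vert_{1}+\Vert\mathbb{Q}^{n}(h)-f\Vert_{1},$$
and the last term tends to $0$ uniformly in $h$ by (1). Writing the mean-zero function $g-\mathbb{Q}^{m}(h)$ as $c(p-q)$ with $p,q\in\uD$ of disjoint supports and $c\in[0,1]$, it remains to prove the uniform contraction $\sup_{h\in\uD}\sup_{p,q\in\uD}\Vert P_{h}^{[m,n]}(p)-P_{h}^{[m,n]}(q)\Vert_{1}\to 0$. \emph{(iii) Passing to the limit chain.} Since $\Vert|P_{h}^{[j,j+1]}-P_{f}|\Vert=\sup_{\Vert w\Vert_{1}\le1}\Vert\opQ(\mathbb{Q}^{j}(h)-f,w)\Vert_{1}\le\Vert\mathbb{Q}^{j}(h)-f\Vert_{1}\to 0$ uniformly in $h$, a block $P_{h}^{[a,a+n_{0}]}$ with $a$ large lies, in operator norm, within $n_{0}\sup_{j\ge a}\Vert\mathbb{Q}^{j}(h)-f\Vert_{1}$ of $P_{f}^{n_{0}}$, where $P_{f}(\cdot):=\opQ(f,\cdot)$. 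If one knows that the homogeneous chain $(P_{f})_{n\ge0}$ is uniformly asymptotically stable, equivalently (since $f$ is $P_{f}$-invariant) that $\sup_{u,v\in\uD}\Vert P_{f}^{n_{0}}(u)-P_{f}^{n_{0}}(v)\Vert_{1}<2$ for some $n_{0}$, then sub-multiplicativity of the Markov diameter $\mathrm{diam}(P):=\sup_{u,v\in\uD}\Vert Pu-Pv\Vert_{1}$ (namely $\mathrm{diam}(PR)\le\tfrac12\,\mathrm{diam}(P)\,\mathrm{diam}(R)$) together with the block-perturbation bound forces $\sup_{h}\mathrm{diam}(P_{h}^{[m,n]})$ to decay geometrically in $n$; combined with stage (ii), this gives (3).

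The step I expect to be the main obstacle is the key fact invoked in stage (iii): that norm mixing of $\mathbb{Q}$ entails uniform asymptotic stability of the fixed-point ``linearization'' $P_{f}=\opQ(f,\cdot)$. Heuristically this is plausible, since $2P_{f}$ is the Fr\'echet derivative of $\mathbb{Q}$ at $f$ on the mean-zero subspace and (1) ought to exclude any non-decaying mode there; but making this rigorous in $L^{1}$, where $\uD$ has empty interior, is delicate. I would attack it by combining the exact expansion $\mathbb{Q}(f+v)=f+2P_{f}(v)+\opQ(v,v)$ with the monotonicity inequality $\opQ(a,b)\ge\opQ(a\wedge b,a\wedge b)=\mathbb{Q}(a\wedge b)$ and with the fact that $\mathbb{Q}^{N}(u)$ is uniformly close to $f$ once $N$ is large, so as to extract an overlap (lower-bound) estimate for an iterate $P_{f}^{n_{0}}$; alternatively one can appeal to the classical characterization of uniform asymptotic stability of a linear Markov operator via the existence of a nontrivial lower-bound function, and produce such a function from (1). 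This is where the detailed machinery developed in our earlier work on quadratic stochastic operators would be needed, and where the bulk of the effort goes.
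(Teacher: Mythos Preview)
The paper does not contain a proof of this theorem: it is quoted verbatim from \citet{KBarMPul2015} and merely cited here as a known result, so there is no ``paper's own proof'' to compare against.

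That said, let me comment on your proposal on its own merits. Your handling of the easy implications is correct. The shift identity $P_{h}^{[m,n]}=P_{\mathbb{Q}^{m}(h)}^{[0,n-m]}$ (which you implicitly use when writing $P_{h}^{[m,n]}(\mathbb{Q}^{m}(h))=\mathbb{Q}^{n}(h)$) in fact gives $(2)\Rightarrow(3)$ immediately: $\sup_{g,h}\Vert P_{h}^{[m,n]}(g)-f\Vert_{1}\le\sup_{g,h'}\Vert P_{h'}^{[0,n-m]}(g)-f\Vert_{1}$. So the cycle could equally well be run as $(1)\Rightarrow(2)\Rightarrow(3)\Rightarrow(2)\Rightarrow(1)$, with $(1)\Rightarrow(2)$ carrying all the content; this is a minor rearrangement of what you wrote.

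The genuine gap is exactly where you locate it: you reduce $(1)\Rightarrow(3)$ to the claim that uniform convergence $\mathbb{Q}^{n}(g)\to f$ forces the linearization $P_{f}=\opQ(f,\cdot)$ to be uniformly asymptotically stable, and you do not prove this. Your heuristic via the Fr\'echet derivative is suggestive but, as you note, does not go through directly in $L^{1}$ because $\uD$ has empty interior, so perturbations $f+v\in\uD$ with $\Vert v\Vert_{1}$ small need not exist in useful directions. The alternative you sketch (producing a nontrivial lower-bound function for some iterate $P_{f}^{n_{0}}$ from the overlap of iterates of $\mathbb{Q}$) is the right instinct, but it is not worked out, and this is precisely the step where the argument of the cited paper is needed. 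In short: your scaffolding is sound and your diagnosis of the obstacle is accurate, but the proposal as written is a reduction to an unproved lemma rather than a complete proof.
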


\section{Mutual correspondence between $L^1$ and $\ell^1$ spaces and its consequences}
\label{secConnection}
Let us recall that a measurable countable partition $\xi := \{ B_{k}\}_{k=1}^{\infty}$  of $X$  is called
consistent with $\sigma$--finite
measure $\mu$ if $0 < \mu(B_{k}) < \infty$ for all $k$. Such partitions exist since the measure $\mu$ is $\sigma$--finite.
Given a consistent measurable countable partition $\xi := \{ B_{k}\}$ and any $f_{1}, f_{2} \in L^1$ we write

$$
f_{1} \sim f_{2} \Leftrightarrow \forall_{k} \int_{B_{k}} f_{1} \ud \mu=  \int_{B_{k}} f_{2} \ud \mu \
$$
what defines an equivalence relationship on the $L^{1}$ space.
Hence each equivalence class (taking $f \in L^{1}$ as its representative)
can be associated with an element $p_{f} \in \ell^{1}$, namely take

$$\ell^{1} \ni p_{f} = \left(\int_{B_{1}} f \ud \mu, \int_{B_{2}} f \ud \mu, \dots  \right).$$
Notice that the coordinates of the vector $p_{f}$ are actually the conditional expectations
$\E{\cdot \vert B_{k}}$ for the density $f$
and measure $\mu$.

Motivated by the mutual correspondence between $L^1$ and $\ell^1$ we recall the definition of a quadratic
stochastic operator on $\ell^1$ \citep{WBarMPul2013}.
\begin{defn}
A quadratic stochastic operator is defined as a cubic array of nonnegative real numbers
$\mathbf{Q}_{seq}=  [q_{ij,k}]_{i,j,k \geq 1}$ if it satisfies
\begin{enumerate}
\item[(D1)]  $ 0 \leq q_{ij,k} = q_{ji,k}\leq 1$ for all $i,j,k \geq 1 $,
\item[(D2)]  $\sum_{k=1}q_{ij, k} = 1$ for any pair $(i,j)$.
\end{enumerate}
\end{defn}
Such a cubic matrix $\mathbf{Q}_{seq}$ may be viewed  as a bilinear mapping
$\mathbf{Q}_{seq}: \ell^1 \times \ell^1 \to \ell^1 $ if we
set $\mathbf{Q}_{seq}((x_1,x_2,\ldots), (y_1,y_2,\ldots))_k = \sum_{i, j=1}x_iy_jq_{ij,k}$ for any $k \geq 1$.

We denote by $\mathfrak{Q}:= \mathfrak{Q}(L^{1})$ and $\mathfrak{Q}(\ell^{1})$ the sets of all
quadratic stochastic operators defined on $L^{1}$ and $\ell^{1}$ respectively.
Let $\mathbb{E} \colon \mathfrak{Q}(L^{1}) \to \mathfrak{Q}(\ell^{1})$ be defined by

$$
\mathbb{E}\opQ(\underline{x},\underline{y}) = 
\sum\limits_{i=1}^{\infty} \left(\int\limits_{B_i} 
\opQ(\sum\limits_{k=1}^{\infty}\frac{x_{k}}{\mu(B_{k})}\mathbf{1}_{B_{k}},
\sum\limits_{l=1}^{\infty}\frac{y_{l}}{\mu(B_{l})}\mathbf{1}_{B_{l}})  d\mu \right) \underline{e_i},
$$
where $\underline{x}=(x_1,x_2,\ldots), \underline{y}=(y_1,y_2,\ldots)\in \ell^{1}$, $\underline{e_i} := (\delta_{ji})_{j \geq 1}$ and $\delta_{ji} = 1$ for $j=i$ and $\delta_{ji} = 0$ for $j \neq i$.
Let us notice that the mapping $\mathbb{E}$ is continuous. Indeed,
let $\opQ_1 \in \mathfrak{Q}(L^1)$
and $\varepsilon >0$ be fixed and choose $\delta = \varepsilon$. Then for any $\opQ_2 \in \mathfrak{Q}(L^1)$
satisfying $d_u(\opQ_1, \opQ_2) = \sup_{f \in \uD} \Vert\mathbb{Q}_1(f) - \mathbb{Q}_2(f)\Vert_1 < \delta$
we have

$$
\begin{array}{rcl}
\sup\limits_{\{\underline{x} \in \ell^{1}: \Vert \underline{x}\Vert_{\ell^{1}}=1, x_{k}\ge 0\}} 
\left\Vert\mathbb{E}\opQ_1(\underline{x},\underline{x}) - \mathbb{E}\opQ_2(\underline{x},\underline{x})\right\Vert_{\ell^1}
&\leq& \sup\limits_{f \in \uD} \sum\limits_{k=1}^\infty \left| \int_{B_k} \mathbb{Q}_1(f)d\mu- \int_{B_k} \mathbb{Q}_2(f)d\mu \right|
\\ &\leq& \sup\limits_{f \in \uD} \sum\limits_{k=1}^\infty \int_{B_k} \left| \mathbb{Q}_1(f) - \mathbb{Q}_2(f) \right| d\mu
\\ &=& \sup\limits_{f \in \uD} \int_{X} \left| \mathbb{Q}_1(f) - \mathbb{Q}_2(f) \right| d\mu
\\ &=& \sup\limits_{f \in \uD} \left\Vert \mathbb{Q}_1(f) - \mathbb{Q}_2(f) \right\Vert_1 < \varepsilon.
\end{array}
$$

\begin{rem}\label{Rem_cont}
By the continuity of the mapping $\mathbb{E}$ we obtain that if $\mathcal{Q}$ is an
open subset of $\mathfrak{Q}(\ell^1)$ then its preimage $\mathbb{E}^{-1}(\mathcal{Q})$ is an open subset in
$\mathfrak{Q}(L^1)$. We will use this fact to describe the geometric structure of the
set $\mathfrak{Q}(L^1)$.
\end{rem}

\section{Prevalence problem in the set of quadratic stochastic operators}
\label{sec:4}
In this section we study whether there is any typical asymptotic behaviour of quadratic stochastic operators.
We use symbols $\textrm{Int}(A)$, $A^{\complement}$ and $\textrm{diam}(A)$ to denote the interior,
the complement and the diameter of the set $A$.
Of course the whole below discussion excludes the trivial cases of $\dim \ell^{1} = 1$ 
and $\dim L^{1}=1$.

We begin our study with a description of the class of (uniformly) asymptotically stable quadratic stochastic operators.
Recall that \citet{WBarMPul2013} proved that the interior of the set $\mathfrak{Q}_{sm}^{\complement}(\ell^1)$,
quadratic stochastic operators acting on $\ell^{1}$ which are not strong mixing, is nonempty.
Taking into account the mutual correspondence between the spaces $L^{1}$ and $\ell^{1}$ discussed in
the previous section as well as Remark \ref{Rem_cont} we obtain the following

\begin{cor}
The set \ $\mathbb{E}^{-1}(\textrm{Int}(\mathfrak{Q}_{sm}^{\complement}(\ell^1)))$ \ is nonempty, open and
moreover
\bd\mathbb{E}^{-1}(\textrm{Int}(\mathfrak{Q}_{sm}^{\complement}(\ell^1))) \subseteq \mathfrak{Q}_{sm}^{\complement}.\ed
In particular, $\mathfrak{Q}_{nm}^{\complement}$ contains a nonempty open set.
\end{cor}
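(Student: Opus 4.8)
The plan is to deduce the corollary from the two ingredients already in hand: the result of \citet{WBarMPul2013} that $\textrm{Int}(\mathfrak{Q}_{sm}^{\complement}(\ell^1)) \neq \emptyset$, and the continuity of $\mathbb{E} \colon \mathfrak{Q}(L^1) \to \mathfrak{Q}(\ell^1)$ recorded in Remark \ref{Rem_cont}. First I would observe that $\textrm{Int}(\mathfrak{Q}_{sm}^{\complement}(\ell^1))$ is by construction an open subset of $\mathfrak{Q}(\ell^1)$, so by Remark \ref{Rem_cont} its preimage $\mathbb{E}^{-1}(\textrm{Int}(\mathfrak{Q}_{sm}^{\complement}(\ell^1)))$ is open in $\mathfrak{Q}(L^1)$. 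For nonemptiness I would exhibit at least one $\opQ \in \mathfrak{Q}(L^1)$ whose image lands in that open set; the natural candidate is to lift a witnessing $\ell^1$-operator back to $L^1$ via the embedding that sends the cubic array $[q_{ij,k}]$ to the kernel operator acting through the fixed consistent partition $\xi = \{B_k\}$, i.e. $\opQ(f,g) = \sum_{i,j,k} q_{ij,k}\,\big(\int_{B_i} f\,\ud\mu\big)\big(\int_{B_j} g\,\ud\mu\big)\,\mu(B_k)^{-1}\mathbf{1}_{B_k}$, and then check $\mathbb{E}\opQ = \opQ_{seq}$, so the image is exactly the chosen interior point.

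The second assertion, $\mathbb{E}^{-1}(\textrm{Int}(\mathfrak{Q}_{sm}^{\complement}(\ell^1))) \subseteq \mathfrak{Q}_{sm}^{\complement}$, is the one requiring a genuine argument rather than bookkeeping. The key step is a transfer principle: if $\mathbb{E}\opQ$ is \emph{not} strong mixing on $\ell^1$, then $\opQ$ is not strong mixing on $L^1$. I would prove the contrapositive. Suppose $\opQ \in \mathfrak{Q}(L^1)$ is strong mixing with limit density $f_* \in \uD$, so $\Vert \mathbb{Q}^n(g) - f_*\Vert_1 \to 0$ for every $g \in \uD$. The point is that $\mathbb{E}$ intertwines the nonlinear dynamics: for the block-constant densities $\iota(\underline{x}) := \sum_k \frac{x_k}{\mu(B_k)}\mathbf{1}_{B_k}$ one has, by definition of $\mathbb{E}$ and the fact that averaging over the partition $\xi$ commutes with the construction, $\mathbb{E}\opQ(\underline{x},\underline{x}) = \pi(\mathbb{Q}(\iota(\underline{x})))$ where $\pi(f) = p_f = (\int_{B_k} f\,\ud\mu)_k$ is the conditioning projection onto $\ell^1$. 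This does not immediately give $(\mathbb{E}\opQ)^n = \pi \circ \mathbb{Q}^n \circ \iota$ because $\iota \circ \pi \neq I$ on $L^1$; however, since $\pi$ is a contraction ($\Vert p_f - p_g\Vert_{\ell^1} \le \Vert f-g\Vert_1$) and $\iota \circ \pi \circ \iota = \iota$, I would show by induction that $(\mathbb{E}\opQ)^n(\underline{x}) = \pi(\mathbb{Q}^n(\iota(\underline{x})))$ does hold for block-constant starting densities, using that $\mathbb{Q}$ applied to a block-constant density need not be block-constant but that the next application of $\mathbb{E}\opQ$ only sees its $\pi$-image and $\opQ$ is bilinear — the clean way is to note $\mathbb{E}\opQ$ is defined so that $\mathbb{E}\opQ = \pi \circ \opQ \circ (\iota \times \iota)$ as bilinear maps, hence $(\mathbb{E}\opQ)^{n}(\underline x) = \pi(\opQ(\iota\circ\pi\circ\cdots, \iota\circ\pi\circ\cdots))$, and one unwinds this. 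Once the intertwining is established on block-constant densities, strong mixing of $\opQ$ with limit $f_*$ forces $\Vert (\mathbb{E}\opQ)^n(\underline{x}) - p_{f_*}\Vert_{\ell^1} = \Vert \pi(\mathbb{Q}^n(\iota(\underline{x}))) - \pi(f_*)\Vert_{\ell^1} \le \Vert \mathbb{Q}^n(\iota(\underline{x})) - f_*\Vert_1 \to 0$ for every probability vector $\underline{x}$, i.e. $\mathbb{E}\opQ \in \mathfrak{Q}_{sm}(\ell^1)$, contradicting $\mathbb{E}\opQ \in \textrm{Int}(\mathfrak{Q}_{sm}^{\complement}(\ell^1))$.

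The main obstacle I anticipate is precisely verifying the intertwining identity $(\mathbb{E}\opQ)^{n}(\underline x) = \pi(\mathbb{Q}^{n}(\iota(\underline x)))$, or an adequate substitute, because $\iota$ is not a right inverse of $\pi$ and so naive composition breaks down after the first step; the resolution is to check that in the recursion only the $\pi$-image of the current state feeds into $\mathbb{E}\opQ$, combined with the monotone/contractive behaviour of $\pi$, which the excerpt's definition of $\mathbb{E}$ is tailored to make work. Finally, the last sentence is immediate: $\mathfrak{Q}_{nm} \subsetneq \mathfrak{Q}_{sm}$ gives $\mathfrak{Q}_{sm}^{\complement} \subseteq \mathfrak{Q}_{nm}^{\complement}$, so the nonempty open set $\mathbb{E}^{-1}(\textrm{Int}(\mathfrak{Q}_{sm}^{\complement}(\ell^1)))$ we have located inside $\mathfrak{Q}_{sm}^{\complement}$ lies \emph{a fortiori} inside $\mathfrak{Q}_{nm}^{\complement}$, proving that $\mathfrak{Q}_{nm}^{\complement}$ contains a nonempty open set and hence that $\mathfrak{Q}_{nm}$ is not dense.
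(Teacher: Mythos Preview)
The paper offers no proof of this corollary beyond the sentence ``Taking into account the mutual correspondence between the spaces $L^{1}$ and $\ell^{1}$ \ldots\ as well as Remark~\ref{Rem_cont} we obtain the following,'' so there is nothing detailed to compare against. Your treatment of openness (via Remark~\ref{Rem_cont}) and of nonemptiness (lifting a witness $\opQ_{seq}\in\textrm{Int}(\mathfrak{Q}_{sm}^{\complement}(\ell^1))$ to a block-constant kernel operator on $L^1$ and checking $\mathbb{E}\opQ=\opQ_{seq}$) is correct and is surely what the authors intended.

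The gap is in your argument for the inclusion $\mathbb{E}^{-1}(\textrm{Int}(\mathfrak{Q}_{sm}^{\complement}(\ell^1))) \subseteq \mathfrak{Q}_{sm}^{\complement}$. You aim to prove the contrapositive via the intertwining identity $(\mathbb{E}\opQ)^n(\underline x)=\pi\bigl(\mathbb{Q}^n(\iota\underline x)\bigr)$, and you correctly flag that $\iota\circ\pi\neq I$ is an obstruction---but your ``resolution'' does not remove it. Writing out the induction step,
\[
(\mathbb{E}\opQ)^{n+1}(\underline x)
\;=\;\pi\,\opQ\bigl(\iota\pi\,\mathbb{Q}^n(\iota\underline x),\,\iota\pi\,\mathbb{Q}^n(\iota\underline x)\bigr),
\]
one sees that to match $\pi\,\mathbb{Q}^{n+1}(\iota\underline x)=\pi\,\opQ\bigl(\mathbb{Q}^n(\iota\underline x),\,\mathbb{Q}^n(\iota\underline x)\bigr)$ you would need $\pi\opQ(\iota\pi u,\iota\pi u)=\pi\opQ(u,u)$ for $u=\mathbb{Q}^n(\iota\underline x)$. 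There is no reason this should hold for a general $\opQ$: already in the linear case $\opQ(f,g)=\tfrac12\bigl(Pf\!\int g+Pg\!\int f\bigr)$ the two sides are $(\pi P\iota)^{n+1}\underline x$ and $\pi P^{n+1}\iota\,\underline x$, which differ whenever $P$ does not send block-constant functions to block-constant functions. So the intertwining identity is false in general, and with it your transfer principle ``$\opQ$ strong mixing $\Rightarrow$ $\mathbb{E}\opQ$ strong mixing.''

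Note that the identity \emph{does} hold for the specific lifted operators you use for nonemptiness, since those map block-constant densities to block-constant densities; this is precisely why such a lift lies in $\mathfrak{Q}_{sm}^{\complement}(L^1)$. But this observation covers only a single point of the preimage, not the whole of $\mathbb{E}^{-1}(\textrm{Int}(\mathfrak{Q}_{sm}^{\complement}(\ell^1)))$ as the inclusion asserts. A correct argument for the full inclusion---which the paper does not supply either---would have to proceed by some route other than an orbitwise intertwining that is simply unavailable for generic $\opQ$.
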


\noindent On the other hand, \citet{WBarMPul2013} also showed that the interior of
the set $\mathfrak{Q}_{nm}^{\complement}(\ell^1)$, quadratic stochastic operators acting on $\ell^{1}$
which are norm mixing, is nonempty as well. We will show directly that $\textrm{Int}(\mathfrak{Q}_{nm}(L^{1}))$
is non--empty.

\begin{exam}\label{exQnm}{\sl
Let us define
$\mathbf{Q}^{\diamond}\in \mathfrak{Q}$ by
\mbox{
$\mathbf{Q}^{\diamond}(f,g) = (\int_X f d\mu) (\int_X g d\mu) h$,} 
for any $f, g \in L^{1}$ and a fixed density function
$h \in \uD$. Clearly $\mathbf{Q}^{\diamond}$ is norm mixing.
Suppose that $\mathbf{Q}\in \mathfrak{Q}$ satisfies
$\widehat{d}_u(\mathbf{Q}, \mathbf{Q}^{\diamond} ) = \varepsilon < \frac{1}{3}$.
We will show that such a $\mathbf{Q}$ is also norm mixing.
For any $f,g,u,v \in \uD$ we have

$$
\begin{aligned}
\Vert\mathbf{Q}(f,g) - \mathbf{Q}(u,v) \Vert_{1} \leq \Vert\mathbf{Q}(f,g) - \mathbf{Q}^{\diamond}(f,g)\Vert_{1} +
\Vert\mathbf{Q}(u,v) - \mathbf{Q}^{\diamond}(u,v)  \Vert_{1} \leq 2\varepsilon \ .
\end{aligned}
$$
Thus, $\mathrm{diam}(\overline{\mathbb{Q}(\uD)})\le 2\varepsilon$. Let $u,v \in \overline{\mathbb{Q}(\uD)}$
and so $\Vert u-v \Vert_{1} = \kappa \le 2\varepsilon$. 
We denote by $\wedge$ the ordinary minimum in $L^{1}$
and let $f=u - u \wedge v$, $g=v- u \wedge v$. 
Since $L^{1}$ is an AL--space, then $\Vert u \wedge v \Vert_{1}=1-\frac{\kappa}{2}$ and
$\Vert g \Vert_{1}=\Vert f \Vert_{1}=\frac{\kappa}{2}$. We have

$$
\begin{aligned}
&\left\Vert\mathbb{Q}(u) - \mathbb{Q}(v) \right\Vert_{1}=
\left\Vert\mathbb{Q}(u\wedge v + f) - \mathbb{Q}(u\wedge v +g) \right\Vert_{1}
\\
&=
\left\Vert2\mathbf{Q}(u\wedge v,f) + \mathbb{Q}(u \wedge v) + \mathbb{Q}(f) - 2\mathbf{Q}(u \wedge v,g)
-\mathbb{Q}(u \wedge v) - \mathbb{Q}(g)\right \Vert_{1}
\\
&\leq
2\left\Vert\mathbf{Q}(u\wedge v,f) - \mathbf{Q}(u \wedge v,g)\right\Vert_{1}
+ \left\Vert\mathbb{Q}(f) \right\Vert_{1} + \left\Vert\mathbb{Q}(g) \right\Vert_{1}
\\
&= 2\frac{\kappa}{2}\left(1-\frac{\kappa}{2}\right)
\left\Vert\mathbf{Q}(\frac{u\wedge v}{1-\kappa/2},\frac{f}{\kappa/2})-
\mathbf{Q}(\frac{u\wedge v}{1-\kappa/2},\frac{g}{\kappa/2}) \right\Vert_{1} + \frac{\kappa^{2}}{2}
\\ &\leq \kappa\left(1-\frac{\kappa}{2}\right)2\varepsilon + \frac{\kappa^{2}}{2}
< 2\kappa \varepsilon +\frac{\kappa^{2}}{2}
\\
&= \left(2\varepsilon + \frac{\kappa}{2}\right)\kappa = \left(2\varepsilon+\frac{\kappa}{2}\right)\left\Vert u-v \right\Vert_{1}
\le \left(2\varepsilon + \varepsilon\right)\left\Vert u-v \right\Vert_{1} = 3\varepsilon \left\Vert u-v \right\Vert_{1} \ .
\end{aligned}
$$
It follows that $\mathbb{Q}$ is a strict contraction. Applying Banach's
fixed point theorem we obtain

$$
\begin{aligned}
\sup\limits_{u,v \in \uD} \left\Vert\mathbb{Q}^{n}(u)- \mathbb{Q}^{n}(v)\right\Vert_{1} \le 2 (3\varepsilon)^{n-1} \rightarrow 0,
\end{aligned}
$$
which gives that $\mathbf{Q}$ is also norm mixing. Hence, 
$\mathrm{Ball}(\mathbf{Q}^\diamond, \tfrac{1}{3}) \subseteq Int(\mathfrak{Q}_{nm})$.}
\end{exam}

\begin{cor}
$Int(\mathfrak{Q}_{nm}) \neq \emptyset$.
\end{cor}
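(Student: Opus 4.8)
The plan is to obtain the statement as an immediate consequence of Example \ref{exQnm}. First I would recall that in that example we fix a density $h \in \uD$, set $\mathbf{Q}^{\diamond}(f,g) = (\int_X f\,d\mu)(\int_X g\,d\mu)h$, and verify directly that $\mathbf{Q}^{\diamond}$ is norm mixing. The substance of the example is the accompanying stability claim: any $\mathbf{Q} \in \mathfrak{Q}$ with $\widehat{d}_u(\mathbf{Q},\mathbf{Q}^{\diamond}) = \varepsilon < \tfrac{1}{3}$ is itself norm mixing. Granting this, the open ball $\mathrm{Ball}(\mathbf{Q}^{\diamond},\tfrac{1}{3})$ in the metric $\widehat{d}_u$ is contained in $\mathfrak{Q}_{nm}$, so $\mathbf{Q}^{\diamond}$ lies in $\mathrm{Int}(\mathfrak{Q}_{nm})$ and the interior is non-empty.

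One point worth making explicit is that the topology does not depend on the choice between $d_u$ and $\widehat{d}_u$: the chain of inequalities $\tfrac{1}{4}\widehat{d}_u \le d_u \le \widehat{d}_u$ established earlier shows the two metrics are equivalent, so the $\widehat{d}_u$-ball around $\mathbf{Q}^{\diamond}$ is open in the common topology; indeed it contains the $d_u$-ball of radius $\tfrac{1}{12}$ about $\mathbf{Q}^{\diamond}$, since $d_u(\mathbf{Q},\mathbf{Q}^{\diamond}) < \tfrac{1}{12}$ forces $\widehat{d}_u(\mathbf{Q},\mathbf{Q}^{\diamond}) \le 4 d_u(\mathbf{Q},\mathbf{Q}^{\diamond}) < \tfrac{1}{3}$. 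Thus $\mathrm{Int}(\mathfrak{Q}_{nm})$ computed in either metric is the same set, and it is non-empty.

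I do not expect a genuine obstacle here, since all the work is done in Example \ref{exQnm}; the only thing to double-check is that the contraction estimate there is applied on the correct set. The bound $\|\mathbb{Q}(u) - \mathbb{Q}(v)\|_1 \le 3\varepsilon\|u-v\|_1$ is derived for $u,v \in \overline{\mathbb{Q}(\uD)}$, using that $\mathrm{diam}(\overline{\mathbb{Q}(\uD)}) \le 2\varepsilon$, bilinearity of $\mathbf{Q}$, and the AL-space identity $\|u\wedge v\|_1 = 1 - \tfrac{\kappa}{2}$ for $\kappa = \|u-v\|_1$. Since $\mathbb{Q}$ maps $\uD$ into the complete $\mathbb{Q}$-invariant set $\overline{\mathbb{Q}(\uD)}$, Banach's fixed point theorem applies there and yields $\sup_{u,v\in\uD}\|\mathbb{Q}^{n}(u) - \mathbb{Q}^{n}(v)\|_1 \le 2(3\varepsilon)^{n-1} \to 0$, which is precisely norm mixing of $\mathbf{Q}$. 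This closes the argument and gives $\mathrm{Int}(\mathfrak{Q}_{nm}) \neq \emptyset$.
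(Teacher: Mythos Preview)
Your proposal is correct and follows exactly the paper's approach: the corollary is an immediate consequence of Example~\ref{exQnm}, which establishes $\mathrm{Ball}(\mathbf{Q}^{\diamond},\tfrac{1}{3})\subseteq\mathfrak{Q}_{nm}$. Your added remark on the equivalence of $d_u$ and $\widehat{d}_u$ is a helpful clarification, but the paper treats the corollary as self-evident from the example and offers no separate proof.
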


Hence neither norm mixing nor non norm mixing quadratic stochastic operators can be considered as generic.
We introduce another type of asymptotic behaviour of quadratic stochastic operators and show that it is
prevalent in $\mathfrak{Q}$.

\begin{defn}
We say that $\mathbf{Q}\in \mathfrak{Q}$ is norm quasi--mixing if

$$
\lim_{n\to\infty} \sup\limits_{f, g, h\in \mathcal{D}} \left\Vert P_{f}^{[0,n]}(g) - P_{f}^{[0,n]}(h) \right\Vert_1 = 0 \ .
$$
The set of all norm quasi--mixing quadratic stochastic operators will
be denoted by $\mathfrak{Q}_{nqm} $.
\end{defn}
Let us note that the norm quasi--mixing condition is equivalent to

$$
\lim_{n\to\infty} \sup\limits_{f, g \in \mathcal{D}} \left\Vert P_{f}^{[0,n]}(g) - \mathbb{Q}^n(f) \right\Vert_1 = 0  .
$$
Indeed, to see the necessity of the above condition it is enough to substitute $h = f$
as $P_{f}^{[0,n]}(f) = \mathbb{Q}^n(f)$. The sufficiency follows directly
from the triangle inequality, namely

$$
\sup\limits_{f,g,h \in \mathcal{D}} \left\Vert P_{f}^{[0,n]}\left(g\right) - P_{f}^{[0,n]}\left(h\right) \right\Vert_1
\leq \sup\limits_{f,g\in \mathcal{D}} \left\Vert P_{f}^{[0,n]}\left(g\right) - \mathbb{Q}^n\left(f\right) \right\Vert_1
+ \sup\limits_{f,h \in \mathcal{D}} \left\Vert P_{f}^{[0,n]}\left(h\right) - \mathbb{Q}^n\left(f\right)\right\Vert_1 .
$$

Clearly norm mixing quadratic stochastic operators are also norm quasi--mixing, since according to the Theorem~\ref{thm6}
we have

$$
\sup\limits_{g,h\in \mathcal{D}}  \left\Vert P_{h}^{[0,n]}\left(g\right) - \mathbb{Q}^n\left(h\right) \right\Vert_1 \leq \sup\limits_{g,h\in \mathcal{D}}  \left\Vert P_{h}^{[0,n]}\left(g\right) - f \right\Vert_1 + \sup\limits_{h\in \mathcal{D}}  \left\Vert \mathbb{Q}^n\left(h\right) - f \right\Vert_1 \xrightarrow {n \to \infty} 0 \ .
$$
On the other hand, norm--quasi mixing does not imply strong (and hence norm) mixing. This can be seen in the example below.

\begin{exam}
Given a (homogeneous)
Markov operator \mbox{
$P \colon L^1 \to L^1$} let us define $\opQ \in \mathfrak{Q}$
by

$$\opQ(f,g) = \frac{1}{2}\left( \left(\int_X g d\mu \right)P f + \left(\int_X f d\mu \right)P g \right)$$ for any $f, g \in L^1$.
Then for a fixed density $f \in \uD$
and any $g \in \uD$ we have $P_f(g)  = \opQ(f,g) = \tfrac{1}{2}(Pf + Pg)$.
Thus using the fact that $P$ is a
strict contraction we get

$$\left\Vert P_f(g) - P_f(h) \right\Vert_1  = \frac{1}{2} \left\Vert  Pg - Ph \right\Vert_1 \leq \frac{1}{2} \left\Vert g -h \right\Vert_1 \ .$$
Similarly, for any natural $n$ we have

$$
\begin{aligned}
\left\Vert P_f^{[0,n]}(g) - P_f^{[0,n]}(h)\right\Vert_1 &= \left\Vert\frac{1}{2}\left(P(P_f^{[0,n-1]}(g)) -
P(P_f^{[0,n-1]}(h))\right)\right\Vert_1
\\ &
\leq \frac{1}{2} \left\Vert P_f^{[0,n-1]}(g) - P_f^{[0,n-1]}(h) \right\Vert_1 \leq \ldots \leq \frac{1}{2^n}
\left\Vert f - h \right\Vert_1 \ .
\end{aligned}
$$
Thus

$$ \sup\limits_{f,g,h\in\uD} \left\Vert P_f^{[0,n]}(g) - P_f^{[0,n]}(h)\right\Vert_1 \leq \frac{1}{2^{n-1}} \xrightarrow {n \to \infty} 0 $$
and hence $\opQ$ is norm quasi--mixing.
On the other hand, let us assume that the Markov operator $P$ does not possess any invariant density.
Since $\mathbb{Q}(f) = Pf$ for any $f \in \uD$, then neither $\mathbb{Q}$ has invariant densities.
In particular $\opQ$ is not strong (and hence not norm) mixing.
\end{exam}

Below we present our main result. We show that $\mathfrak{Q}_{nqm}$ is a large set in the topology
induced by metric $d_{u}$ and hence norm quasi--mixing can be considered as a generic property
in the class of quadratic stochastic operators.

\begin{thm} 
$\mathfrak{Q}_{nqm}$ is a dense and open subset for the metric $d_{u}$.
\end{thm}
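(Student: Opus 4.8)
The plan is to treat density and openness separately, in both cases reducing everything to a Dobrushin--type contraction coefficient of the nonhomogeneous Markov chains associated with a quadratic stochastic operator. For a Markov operator $T\colon L^1\to L^1$ put $\delta(T):=\sup_{f,g\in\uD}\tfrac12\Vert Tf-Tg\Vert_1\in[0,1]$. Splitting a function $w\in L^1$ with $\int_X w\,\ud\mu=0$ into positive and negative parts of equal mass gives $\Vert Tw\Vert_1\le\delta(T)\Vert w\Vert_1$, and since Markov operators preserve the integral this yields the submultiplicativity $\delta(T_1T_2)\le\delta(T_1)\delta(T_2)$. With the chain $P_f^{[k,k+1]}(h)=\opQ(\mathbb{Q}^k(f),h)$ associated with $\opQ$ we have $\sup_{g,h\in\uD}\Vert P_f^{[0,n]}(g)-P_f^{[0,n]}(h)\Vert_1=2\,\delta\big(P_f^{[0,n]}\big)$, so $\opQ\in\mathfrak{Q}_{nqm}$ is precisely the assertion that $\sup_{f\in\uD}\delta\big(P_f^{[0,n]}\big)\to0$ as $n\to\infty$; this is the quantity we must control.

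For \emph{density}, fix $\opQ\in\mathfrak{Q}$ and $\varepsilon>0$. Take $\opQ^\diamond(f,g)=(\int_X f\,\ud\mu)(\int_X g\,\ud\mu)\,\varphi$ for a fixed $\varphi\in\uD$ as in Example~\ref{exQnm}, and set $\opQ_\lambda:=(1-\lambda)\opQ+\lambda\,\opQ^\diamond$ with $\lambda\in(0,1)$. A direct check gives $\opQ_\lambda\in\mathfrak{Q}$ and $d_u(\opQ,\opQ_\lambda)=\lambda\,d_u(\opQ,\opQ^\diamond)\le2\lambda$. On densities $\opQ^\diamond(u,g)=\varphi$ does not depend on $g$, so for $u,g,g'\in\uD$ one has $\opQ_\lambda(u,g)-\opQ_\lambda(u,g')=(1-\lambda)\big(\opQ(u,g)-\opQ(u,g')\big)$; hence every operator $(P_\lambda)_f^{[k,k+1]}$ of the chain associated with $\opQ_\lambda$ satisfies $\delta\big((P_\lambda)_f^{[k,k+1]}\big)\le1-\lambda$, uniformly in $f$ and $k$. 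By submultiplicativity $\sup_{f\in\uD}\delta\big((P_\lambda)_f^{[0,n]}\big)\le(1-\lambda)^n\to0$, so $\opQ_\lambda\in\mathfrak{Q}_{nqm}$; choosing $\lambda<\varepsilon/2$ yields $d_u(\opQ,\opQ_\lambda)<\varepsilon$, proving density.

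For \emph{openness} the starting point is the restart identity $P_f^{[jN,(j+1)N]}=P_{\mathbb{Q}^{jN}(f)}^{[0,N]}$, valid for all $f\in\uD$, $j\ge0$ and $N\ge1$, which is immediate from $P_f^{[k,k+1]}(h)=\opQ(\mathbb{Q}^k(f),h)$. Put $c_N(\opQ):=\tfrac12\sup_{f,g,h\in\uD}\Vert P_f^{[0,N]}(g)-P_f^{[0,N]}(h)\Vert_1$. If $c_N(\opQ)<1$ for some $N$, the restart identity gives $\delta\big(P_f^{[jN,(j+1)N]}\big)\le c_N(\opQ)$ for all $f$ and $j$, hence $\delta\big(P_f^{[0,mN]}\big)\le c_N(\opQ)^m$ by submultiplicativity, and since Markov operators are $L^1$--contractions $\sup_{f\in\uD}\delta\big(P_f^{[0,n]}\big)\le c_N(\opQ)^{\lfloor n/N\rfloor}\to0$; thus $\opQ\in\mathfrak{Q}_{nqm}$. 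Conversely, $\opQ\in\mathfrak{Q}_{nqm}$ forces $c_N(\opQ)<1$ for all large $N$. Therefore $\mathfrak{Q}_{nqm}=\bigcup_{N\ge1}\{\opQ\in\mathfrak{Q}:c_N(\opQ)<1\}$, and openness follows once we show that for each fixed $N$ the map $\opQ\mapsto c_N(\opQ)$ is continuous for $d_u$. This is the main obstacle: one telescopes over the $N$ layers of the composition $P_f^{[0,N]}(g)$, replacing $\opQ$ by $\opQ'$ one layer at a time and using bilinearity, the uniform continuity of $\opQ$ on the unit ball, and the elementary bound $\Vert\mathbb{Q}^k(f)-(\mathbb{Q}')^k(f)\Vert_1\le(2^k-1)\,d_u(\opQ,\opQ')$, to obtain $\sup_{f,g\in\uD}\Vert P_f^{[0,N]}(g)-(P')_f^{[0,N]}(g)\Vert_1\le C_N\,\widehat{d}_u(\opQ,\opQ')$ with $C_N<\infty$; since $d_u$ and $\widehat{d}_u$ are equivalent, $|c_N(\opQ)-c_N(\opQ')|\le C_N\,\widehat{d}_u(\opQ,\opQ')$, so each set in the union is open. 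Everything else --- the submultiplicativity of $\delta$ and the restart identity --- is routine bookkeeping; the growth of $C_N$ with $N$ caused by bilinearity is the only genuinely delicate point, but harmless since $N$ is fixed in that step.
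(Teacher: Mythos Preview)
Your argument is correct and follows the same architecture as the paper: density via the convex combination $\opQ_\lambda=(1-\lambda)\opQ+\lambda\opQ^\diamond$, and openness via the characterization $\mathfrak{Q}_{nqm}=\bigcup_{N\ge1}\{\opQ:c_N(\opQ)<1\}$, the nontrivial inclusion being obtained from the restart identity and submultiplicativity of the Dobrushin coefficient (the paper does the same computation through its $\gimel$ notation and the AL--space identity $\Vert u-v\Vert_1=2(1-\Vert u\wedge v\Vert_1)$).

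The one point where your treatment genuinely differs is the continuity of $\opQ\mapsto c_N(\opQ)$. The paper shows that $\opQ\mapsto\sup_{f,g,h\in\uD}\Vert\opQ(f,g)-\opQ(f,h)\Vert_1$ is continuous and then asserts ``in particular'' that $\opQ\mapsto\sup_{f,g,h}\Vert\opQ(\mathbb{Q}^{n-1}(f),P_f^{[0,n-1]}(g))-\opQ(\mathbb{Q}^{n-1}(f),P_f^{[0,n-1]}(h))\Vert_1$ is continuous; but the inner arguments $\mathbb{Q}^{n-1}(f)$, $P_f^{[0,n-1]}(g)$, $P_f^{[0,n-1]}(h)$ themselves depend on $\opQ$, so this is not a direct specialization. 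Your explicit telescoping, feeding the recursive bound $\Vert\mathbb{Q}^k(f)-(\mathbb{Q}')^k(f)\Vert_1\le(2^k-1)\,d_u(\opQ,\opQ')$ through the $N$ layers to obtain a Lipschitz estimate $|c_N(\opQ)-c_N(\opQ')|\le C_N\,\widehat{d}_u(\opQ,\opQ')$, is more careful here and in fact supplies what the paper's ``in particular'' leaves implicit.
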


\begin{proof}
We first show that $\mathfrak{Q}_{nqm}$ is a dense subset of $\mathfrak{Q}$ for the metric $d_{u}$.
Let $\mathbf{Q} \in \mathfrak{Q}$ be taken arbitrarily. 
For any $f, g \in L^{1}$ and any fixed density $v \in \mathcal{D}$ define, 
similarly as in Ex. \ref{exQnm},
$\mathbf{Q}^{\Diamond} \in \mathfrak{Q}$ by

$$ \mathbf{Q}^{\Diamond}(f,g):= \left(\int_{X} f \ud \mu \int_{X} g \ud \mu\right) v \ .$$
For any $0<\varepsilon<1$ define $\mathbf{Q}_{\varepsilon} \in \mathfrak{Q}$ by

$$\mathbf{Q}_{\varepsilon} = (1-\varepsilon)\mathbf{Q} + \varepsilon \mathbf{Q}^{\Diamond} \ .$$
Then

$$
\begin{array}{rcl}
d_{u}(\mathbf{Q},\mathbf{Q}_{\varepsilon}) \le \hat{d}_{u}(\mathbf{Q},\mathbf{Q}_{\varepsilon})
&=&
\sup\limits_{f,g \in \mathcal{D}} \left\Vert \mathbf{Q}(f,g) -\mathbf{Q}_{\varepsilon}(f,g) \right\Vert_{1}
\\& =&
\sup\limits_{f,g \in \mathcal{D}} \left\Vert \mathbf{Q}(f,g) -(1-\varepsilon)\mathbf{Q}(f,g) -\varepsilon\mathbf{Q}^{\Diamond}(f,g) \right\Vert_{1}
\le 2\varepsilon \ .
\end{array}
$$
Moreover, $\mathbf{Q}_{\varepsilon}$ is norm quasi--mixing. 
Indeed, consider nonhomogeneous Markov chain 
$_{\varepsilon}\opP_{f} = (_{\varepsilon}P_{f}^{[n,n+1]})_{n \geq 0}$ associated with 
$\opQ_{\varepsilon}$ and $f \in \mathcal{D}$. For any $g, h \in \mathcal{D}$ we have

$$
\begin{array}{ll}
& \left\Vert _{\varepsilon}P_{f}^{[0,n]}(g) - _{\varepsilon}P_{f}^{[0,n]}(h) \right\Vert_{1} \\
 = & \left\Vert
\mathbf{Q}_{\varepsilon}(\mathbb{Q}_{\varepsilon}^{n-1}(f),_{\varepsilon}P_{f}^{[0,n-1]}(g)) - 
\mathbf{Q}_{\varepsilon}(\mathbb{Q}_{\varepsilon}^{n-1}(f),_{\varepsilon}P_{f}^{[0,n-1]}(h)) \right\Vert_{1} \\
 = &\left\Vert
(1-\varepsilon)\mathbf{Q}(\mathbb{Q}_{\varepsilon}^{n-1}(f),_{\varepsilon}P_{f}^{[0,n-1]}(g)) + \varepsilon v
- (1-\varepsilon)\mathbf{Q}(\mathbb{Q}_{\varepsilon}^{n-1}(f),_{\varepsilon}P_{f}^{[0,n-1]}(h)) - \varepsilon v \right\Vert_{1}\\
 = & (1-\varepsilon)\left\Vert
\mathbf{Q}(\mathbb{Q}_{\varepsilon}^{n-1}(f),_{\varepsilon}P_{f}^{[0,n-1]}(g))
- \mathbf{Q}(\mathbb{Q}_{\varepsilon}^{n-1}(f),_{\varepsilon}P_{f}^{[0,n-1]}(h)) \right\Vert_{1}
\\  = &
(1-\varepsilon)\left\Vert \mathbf{Q}(\mathbb{Q}_{\varepsilon}^{n-1}(f),_{\varepsilon}P_{f}^{[0,n-1]}(g)-
_{\varepsilon}P_{f}^{[0,n-1]}(h))\right\Vert_{1}
\\ \le &
(1-\varepsilon)\left\Vert _{\varepsilon}P_{f}^{[0,n-1]}(g) -  _{\varepsilon}P_{f}^{[0,n-1]}(h) \right\Vert_{1}
\le \ldots \le 2(1-\varepsilon)^{n}
\end{array}
$$
and hence

$$
\sup\limits_{g,h \in \mathcal{D}} \left \Vert _{\varepsilon}P_{f}^{[0,n]}(g) - _{\varepsilon}P_{f}^{[0,n]}(h) \right\Vert_{1}  \le 2(1-\varepsilon)^{n} \xrightarrow {n \to \infty} 0 \ .
$$
Therefore $\mathfrak{Q}_{nqm}$ is dense in $\mathfrak{Q}$.

We now show that $\mathfrak{Q}_{nqm}$ is an open set for the metric $d_{u}$.
We first notice that for a nonhomogeneous Markov chain $\opP_{f} = (P_{f}^{[n,n+1]})_{n \geq 0}$
associated with $\opQ \in \mathfrak{Q}$ and $f \in \mathcal{D}$ the sequence

$$\sup\limits_{f,g,h \in \mathcal{D}} \left\Vert P_{f}^{[0,n]}(g) - P_{f}^{[0,n]}(h) \right\Vert_{1}$$
is non--increasing in $n$ as

$$
\begin{array}{rcl}
\sup\limits_{f,g,h \in \mathcal{D}} \left\Vert P_{f}^{[0,n+1]}(g) - P_{f}^{[0,n+1]}(h) \right\Vert_{1}
& = &
\sup\limits_{f,g,h \in \mathcal{D}} \left\Vert P_{f}^{[n,n+1]}(P_{f}^{[0,n]}(g) - P_{f}^{[0,n]}(h)) \right\Vert_{1}
 \\ & \le &
\sup\limits_{f,g,h \in \mathcal{D}} \left\Vert P_{f}^{[0,n]}(g) - P_{f}^{[0,n]}(h) \right\Vert_{1}.
\end{array}
$$
Hence $\opQ \in \mathfrak{Q}$ is norm quasi--mixing if and only if the limit condition is satisfied on some subsequence.
We will show that

$$
\mathfrak{Q}_{nqm}  = \bigcup_{n=1}^{\infty}
\left\{
\mathbf{Q} \in \mathfrak{Q}:
\sup\limits_{f,g,h \in \mathcal{D}} \left\Vert P_{f}^{[0,n]}(g) - P_{f}^{[0,n]}(h)\right\Vert_{1} < 2 \right\} \ .
$$
The inclusion

$$
\mathfrak{Q}_{nqm}  \subseteq \bigcup_{n=1}^{\infty}
\left\{
\mathbf{Q} \in \mathfrak{Q}:
\sup\limits_{f,g,h \in \mathcal{D}} \left\Vert P_{f}^{[0,n]}(g) - P_{f}^{[0,n]}(h)\right\Vert_{1} < 2 \right\}
$$
is obvious. In order to prove the opposite inclusion let us assume that there exists a natural $n$
such that $\opQ \in \mathfrak{Q}$ satisfies

$$ \sup\limits_{f,g,h \in \mathcal{D}} \left\Vert P_{f}^{[0,n]}(g) - P_{f}^{[0,n]}(h) \right\Vert_{1} < 2-\varepsilon $$
for arbitrarily small $\varepsilon > 0$.

Using the fact that $L^{1}$ is an AL--space,
the above inequality can be written in an equivalent form

$$ \inf\limits_{f,g,h \in \mathcal{D}} \left\Vert P_{f}^{[0,n]}(g) \wedge P_{f}^{[0,n]}(h)\right\Vert_{1} > \frac{\varepsilon}{2}.  $$
We introduce the following notation (for $P_{f}^{[0,k]}(g) \neq P_{f}^{[0,k]}(h)$)

$$
\begin{array}{rcl}
\gimel(k,f,g,h) & := &
\frac{P_{f}^{[0,k]}(g) - P_{f}^{[0,k]}(g) \wedge P_{f}^{[0,k]}(h)}{1 - \left\Vert P_{f}^{[0,k]}(g) \wedge P_{f}^{[0,k]}(h) \right\Vert_{1}}
- \frac{P_{f}^{[0,k]}(h) - P_{f}^{[0,k]}(g) \wedge P_{f}^{[0,k]}(h)}{1 - \left\Vert P_{f}^{[0,k]}(g) \wedge P_{f}^{[0,k]}(h) \right\Vert_{1}}
\\ & =  &\frac{P_{f}^{[0,k]}(g) - P_{f}^{[0,k]}(h)}{1 - \left\Vert P_{f}^{[0,k]}(g) \wedge P_{f}^{[0,k]}(h) \right\Vert_{1}}
\in \mathcal{D} - \mathcal{D}.
\end{array}
$$
For any natural $j$ we have

$$
\begin{array}{rcl}
\left\Vert P_{f}^{[0,jn]}(g) - P_{f}^{[0,jn]}(h) \right\Vert_{1} & = &
\left\Vert P_{\mathbb{Q}^{n}(f)}^{[0,n(j-1)]}(P_{f}^{[0,n]}(g) - P_{f}^{[0,n]}(h)) \right\Vert_{1}
\\ &= &
\left\Vert P_{\mathbb{Q}^{n}(f)}^{[0,n(j-1)]}\left(1-\left\Vert P_{f}^{[0,n]}(g) \wedge P_{f}^{[0,n]}(h) \right\Vert_{1} \right) \gimel(n,f,g,h)\right\Vert_{1}
\\ & \le & (1-\frac{\varepsilon}{2})\left\Vert P_{\mathbb{Q}^{n}(f)}^{[0,n(j-1)]}( \gimel(n,f,g,h) )\right\Vert_{1} \le \cdots \le 2\left(1-\frac{\varepsilon}{2}\right)^{j-1} \ .
\end{array}
$$
Thus

$$
\sup\limits_{f,g,h \in \mathcal{D}} \left\Vert P_{f}^{[0,jn]}(g) - P_{f}^{[0,jn]}(h) \right\Vert_{1}
\le 2\left(1-\frac{\varepsilon}{2}\right)^{j-1} \xrightarrow {j \to \infty} 0
$$
and hence $\mathbf{Q} \in \mathfrak{Q}_{nqm}$. Now let us notice that for any fixed natural $n$ the function

$$
\mathfrak{Q} \ni \opQ \mapsto \sup\limits_{f,g,h \in \mathcal{D}} \left\Vert P_{f}^{[0,n]}(g) - P_{f}^{[0,n]}(h)\right\Vert_{1}
$$
is continuous in the metric $d_{u}$. Indeed, let $f, g \in \mathcal{D}$ be fixed. Consider the family of functions

$$
\mathcal{F}_{f,g} := \left\{ F_{f,g} : (\mathfrak{Q}, d_{u}) \ni \opQ \mapsto \opQ(f,g) \in (L^{1}, \left\Vert \cdot \right\Vert_{1}) \right\} \ .
$$
Let $\varepsilon > 0$. Choose $\delta = \tfrac{\varepsilon}{4}$. For any $\opQ_{1} \in \mathfrak{Q}$ and
$F_{f,g} \in \mathcal{F}_{f,g}$, the inequality $d_{u}(\opQ_{1}, \opQ_{2}) < \delta$ implies that

$$
\begin{array}{rcl}
\left\Vert F_{f,g}(\opQ_{1}) - F_{f,g}(\opQ_{2}) \right\Vert_{1} &=& \left\Vert \opQ_{1}(f,g) - \opQ_{2}(f,g)\right\Vert_{1} \\ & \le & \hat{d}_{u}(\opQ_{1}, \opQ_{2}) \le 4 d_{u}(\opQ_{1}, \opQ_{2}) < \varepsilon \ .
\end{array}
$$
Hence $\mathcal{F}_{f,g} $ is equicontinuous. Since

$$
\begin{array}{rcl}
& & \left| \ \left\Vert \opQ_{1}(f,g) - \opQ_{1}(f,h)\right\Vert_{1} - \left\Vert \opQ_{2}(f,g) - \opQ_{2}(f,h)\right\Vert_{1} \ \right| \\ & \le & \left\Vert \opQ_{1}(f,g) - \opQ_{2}(f,g)\right\Vert_{1} + \left\Vert \opQ_{1}(f,h) - \opQ_{2}(f,h)\right\Vert_{1}
\end{array}
$$
for any $f,g,h \in \mathcal{D}$ and $\textrm{diam}(\mathcal{D}) < \infty$ then the function

$$
\mathfrak{Q} \ni \opQ \mapsto \sup\limits_{f,g,h \in \mathcal{D}} \left\Vert \opQ(f,g) - \opQ(f,h) \right\Vert_{1} = \sup\limits_{f,g,h \in \mathcal{D}} \left\Vert F_{f,g}(\opQ) - F_{f,h}(\opQ) \right\Vert_{1}
$$
is well defined and continuous in the metric $d_{u}$. In particular, the function

$$
\mathfrak{Q} \ni \opQ \mapsto \sup\limits_{f,g,h \in \mathcal{D}} \left\Vert \opQ(\mathbb{Q}^{n-1}(f), P_{f}^{[0,n-1]}(g)) - \opQ(\mathbb{Q}^{n-1}(f), P_{f}^{[0,n-1]}(h)) \right\Vert_{1}
$$
is continuous in the metric $d_{u}$. Thus we showed that for any fixed natural $n$ the set

$$ \left\{\mathbf{Q} \in \mathfrak{Q}: \sup\limits_{f,g,h} \left\Vert P_{f}^{[0,n]}(g) - P_{f}^{[0,n]}(h) \right\Vert_{1} < 2 \right\} $$
is an open subset of $\mathfrak{Q}$ in the metric $d_{u}$.

\end{proof}

It is worth emphasizing that even though $\mathfrak{Q}_{nqm}$ is a large set, not every quadratic stochastic operator
is norm quasi--mixing. This is seen in the following example.

\begin{exam}\label{ex8}
Let us consider a partition $X:=B_{1} \cup B_{2}$, $B_{1} \cap B_{2} = \emptyset$.
Define the operator $\mathbf{Q} \in \mathfrak{Q}$ for any $f, g \in L^{1}$ and some fixed $h \in L^{1}$, $h \geq 0$, by

$$
\begin{array}{rcl}
\mathbf{Q}(f,g)
& := &
\frac{h \mathbf{1}_{B_{1}}}{\int_{B_{1}} h \ud \mu } \left(
\int_{B_{1}}f \ud \mu \int_{B_{1}}g\ud \mu +
\int_{B_{1}} f \ud \mu \int_{B_{2}}g\ud \mu +
\int_{B_{2}} f \ud \mu \int_{B_{1}}g\ud \mu
\right) \\
& + & \frac{h \mathbf{1}_{B_{2}}}{\int_{B_{2}} h \ud \mu }
\int_{B_{2}}f \ud \mu \int_{B_{2}}g \ud \mu.
\end{array}
$$
Denote by $\supp{f}:= \{ x \in X: f(x) \neq 0~\mu~a.e.\}$ the support of $f \in L^{1}$. For $f \in \mathcal{D}$ 
such that $\supp{f} \subseteq B_{2}$ we have

$$ \mathbf{Q}(f,g) = \frac{h \mathbf{1}_{B_{1}}}{\int_{B_{1}} h \ud \mu } \int_{B_{1}} g \ud \mu
+ \frac{h \mathbf{1}_{B_{2}}}{\int_{B_{2}} h \ud \mu } \int_{B_{2}} g \ud \mu \ .
$$
Thus if $f, g \in L^{1}$ satisfy $\supp{f} \subseteq B_{2}$ and $\supp{g} \subseteq B_{1}$
then $P_{f}^{[0,n]}(f)=  \frac{h \mathbf{1}_{B_{2}}}{\int_{B_{2}} h \ud \mu }$ and  $P_{f}^{[0,n]}(g) =  \frac{h \mathbf{1}_{B_{1}}}{\int_{B_{1}} h\ud \mu }$.
Hence

$$
\left\Vert P_{f}^{[0,n]}(f) - P_{f}^{[0,n]}(g) \right\Vert_{1} =2.
$$
It follows that $\opQ$ is not norm quasi--mixing.
\end{exam}

\section*{Acknowledgments}
We would like to acknowledge Wojciech Bartoszek for many helpful comments and insights.
Krzysztof Bartoszek was supported by
Svenska Institutets \"Ostersj\"osamarbete scholarship nrs. 00507/2012, 11142/2013, 19826/2014.

\bibliographystyle{plainnat}
\bibliography{KBartoszekPulka}

\end{document}